\documentclass[11pt]{article}

\usepackage[margin=1.15in]{geometry}
\usepackage[T1]{fontenc}
\usepackage{lmodern}

\usepackage{amsmath, amssymb, amsthm, mathtools}

\usepackage[round,authoryear]{natbib}
\usepackage[colorlinks=true,linkcolor=blue,citecolor=blue,urlcolor=blue]{hyperref}

\numberwithin{equation}{section}

\newtheorem{proposition}{Proposition}[section]

\newtheorem{condition}{Condition}[section]
\newenvironment{customcon}[1]
{\renewcommand\thecondition{#1}\condition}
{\endcondition}

\newcommand{\M}{\mathcal{M}}
\newcommand{\W}{\mathcal{W}}
\newcommand{\R}{\mathbb{R}}

\newcommand{\E}{\mathrm{E}}

\newcommand{\dd}{\,d}
\DeclareMathOperator*{\argmin}{\arg\min}

\begin{document}

\begin{center}
{\Large\bfseries A note on a local entropy condition in the Wasserstein space \par}

\vspace{1em}

Chang Jun Im\textsuperscript{1},
Jeong Min Jeon\textsuperscript{2,*},
Byeong U. Park\textsuperscript{3}

\vspace{0.75em}

{\small
\textsuperscript{1}The Institute for Data Innovation in Science, Seoul National University\\
\textsuperscript{2}Department of Statistics and School of Transdisciplinary Innovations, Seoul National University\\
\textsuperscript{3}Department of Statistics, Seoul National University
}

\vspace{0.75em}

{\small *Corresponding author [\texttt{jm.jeon@snu.ac.kr}]}
\end{center}

\vspace{1em}

\begin{abstract}
Entropy conditions are widely used in empirical-process analyses to establish asymptotic properties of M-estimators. In regression problems with responses taking values in a general metric space, a commonly imposed condition requires the entropy integral associated with a shrinking ball centered at the target object to remain uniformly bounded as the ball radius tends to zero. We show that this condition generally fails in the quadratic Wasserstein space of univariate probability distributions supported on a compact interval. Specifically, when the target object is a strictly increasing and absolutely continuous distribution function whose derivative is bounded away from zero and infinity, the corresponding entropy integral diverges as the ball radius tends to zero. The same divergence persists even when the ambient space is restricted to the class of distribution functions satisfying fixed uniform two-sided Lipschitz bounds. These results indicate that different asymptotic analyses are required for the quadratic Wasserstein space.
\end{abstract}

\noindent\textbf{Keywords:}
Covering number; Entropy integral; Fr\'echet regression; Metric space; Wasserstein space.

\medskip
\noindent\textbf{2020 MSC:}
62R20; 62R10.

\section{Introduction}

Non-Euclidean data analysis has become an important part of modern statistics as data objects increasingly take values in spaces that lack a Euclidean vector space structure \citep{Chang1986,Hein2009,DiMarzioPanzeraTaylor2014,JeonParkVanKeilegom2021,stocker2023,JeonVanBever2025,JeonVanBever2026}. Examples include spherical vectors, probability distributions, covariance matrices, networks and shapes. In such settings, standard averages and least-squares regression are often unavailable or geometrically inappropriate. We refer to \citet{MarronAlonso2014} and \citet{dubey2024} for reviews of non-Euclidean data analysis.

Let \(\M\) be a totally bounded metric space with metric \(d_\M\). Suppose that \(Y\) is an \(\M\)-valued response and \(X\) is a predictor. A natural regression target in this setting is the conditional Fr\'echet mean of \(Y\) given \(X=x\), defined as
\[
  m_\oplus(x):=\argmin_{y\in\M}\E(d_\M^2(Y,y)\mid X=x).
\]
Recently, \citet{PetersenMuller2019} introduced global and local regression methods to estimate $m_\oplus(x)$. Subsequent methodological and theoretical developments include functional models for time-varying random objects \citep{DubeyMuller2020}, uniform convergence theory for local Fr\'echet regression \citep{ChenMuller2022}, variable selection for global Fr\'echet regression \citep{TuckerWuMuller2023}, single-index Fr\'echet regression \citep{BhattacharjeeMuller2023}, nonlinear global Fr\'echet regression \citep{Bhattacharjee2025}, partially global Fr\'echet regression \citep{TuckerWu2025} and deep learning for Fr\'echet regression \citep{iao2025}.

For \(m\in\M\) and \(r>0\), write
\[
B_\M(m,r):=\{y\in\M:d_\M(m,y)<r\}.
\]
For any set \(S\subset\M\), let \(N(r,S,d_\M)\) denote the minimal number of open \(d_\M\)-balls of radius \(r\), centered in \(S\), required to cover \(S\). Many studies including the aforementioned works employ the following entropy condition or its variants in their asymptotic analyses:

\begin{customcon}{A}\label{con:A}
At a fixed predictor value \(x\), the following holds:
\begin{equation*}
    \int_0^1 \sqrt{1+\log N(\delta \epsilon,B_\M(m_\oplus(x),\delta),d_\M)}\dd \epsilon=O(1)\quad\text{as}\quad\delta\to0.
\end{equation*}
\end{customcon}

Condition~\ref{con:A} or its uniform-in-$x$ variants are used to
control the stochastic fluctuations of the estimators of $m_\oplus$ through
empirical-process arguments. If this integral diverges, such arguments may yield convergence rates slower than the usual rates derived under Condition~\ref{con:A}.

For univariate distributions, the quantile representation provides an isometric embedding
of the quadratic Wasserstein space into \(L^2(0,1]\). Since
quantile functions are monotone, one might 
expect that standard entropy bounds for monotone function classes
provide Condition~\ref{con:A} when $\mathcal{M}$ is the quadratic Wasserstein space. Recently, \citet{PetersenMuller2026} proved that this approach establishes Condition~\ref{con:A} when $\mathcal{M}$ is restricted to a finite-dimensional subset of the quadratic Wasserstein space, while noting that the condition may fail on the full quadratic Wasserstein space. In this paper, we provide two counterexamples showing that Condition~\ref{con:A} can fail both on the full quadratic Wasserstein space and on some regular subset thereof. Specifically, we prove that, when $m_\oplus(x)$ is any strictly increasing and absolutely continuous distribution function whose derivative is bounded away
from zero and infinity, the integral in Condition~\ref{con:A} is bounded below
by a constant multiple of \(\sqrt{\log(1/\delta)}\) for all sufficiently small \(\delta\). Moreover, the same failure occurs even after restricting $\mathcal{M}$ to a class of strictly increasing and absolutely continuous distribution functions satisfying fixed uniform two-sided Lipschitz bounds. Since the quadratic Wasserstein space is an important response space in Fr\'echet regression, this failure can pose a significant issue.

The rest of this paper is organized as follows.
Section~\ref{sec:first} gives a counterexample for the compactly
supported Wasserstein space. Section~\ref{sec:second} shows that the same
phenomenon persists on a regular subset of the Wasserstein space. Section~\ref{sec:discussion} discusses possible ways to address this issue.

\section{First counterexample}\label{sec:first}

For \(-\infty<a<b<\infty\), let \(\W([a,b])\) denote the set of all distribution functions \(F:\R\to[0,1]\) whose quantile function
\[
  F^{-1}(t)=\inf\{u\in\R:F(u)\ge t\},\quad t\in(0,1],
\]
takes values in \([a,b]\). Endow \(\W([a,b])\) with the quadratic Wasserstein metric defined as
\begin{equation}\label{eq:wass-dist}
    d_{\W([a,b])}(F,G)=\left(\int_0^1\big(F^{-1}(t)-G^{-1}(t)\big)^2\dd t\right)^{1/2}.
\end{equation}
It is well known that the quadratic Wasserstein metric space \((\W([a,b]),d_{\W([a,b])})\) is totally bounded \citep{PanaretosZemel2020}. Without loss of generality, we focus on \(\W:=\W([0,1])\). Write
\begin{equation}\label{eq:H}
    H(F,\delta):=\int_0^1 \sqrt{1+\log N(\delta \epsilon,B_\W(F,\delta),d_\W)}\dd \epsilon.
\end{equation}
Note that the integral in Condition~\ref{con:A} equals $H(m_\oplus(x),\delta)$. The following proposition shows that Condition \ref{con:A} can fail when $(\M,d_\M)=(\W,d_\W)$.

\begin{proposition} \label{thm:first}
    Let $F \in \mathcal{W}$ be a distribution function that is strictly increasing and absolutely continuous on $[0,1]$. Suppose that there exist constants \(0<c_0\le c_1<\infty\) such that $c_0\le F'(u)\le c_1$ for almost everywhere $u\in(0,1)$. Then, $H(F,\delta) \to \infty$ as $\delta \to 0$.
\end{proposition}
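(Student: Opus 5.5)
The plan is to show that, for every fixed small $\epsilon$, $\log N(\delta\epsilon,B_\W(F,\delta),d_\W)$ grows at least like a constant times $\log(1/\delta)$. This gives $H(F,\delta)\ge \epsilon_0\sqrt{c\log(1/\delta)-C}\to\infty$. We work through the quantile embedding $G\mapsto G^{-1}\in L^2(0,1]$, which is an isometry by \eqref{eq:wass-dist}. We also use the fact that any nondecreasing, left-continuous function $q:(0,1]\to[0,1]$ is the quantile function of some $G\in\W$. Let $Q:=F^{-1}$. By the assumptions on $F'$, $Q$ is continuous and strictly increasing, with $Q'(t)\in[1/c_1,1/c_0]$ for almost every $t$. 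On $[1/4,3/4]$ it takes values in $[\eta,1-\eta]$ for some $\eta>0$.

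\textbf{Packing construction.} Fix a width $w=w(\delta)$. Partition $[1/4,3/4]$ into $M\asymp 1/w$ disjoint intervals $I_1,\dots,I_M$ of length $w$. On each $I_j$, place a tent function $\varphi_j$ supported on $I_j$, with slopes $\pm 1/(2c_1)$, so its height is $h=w/(4c_1)$. Set $q_j:=Q+\varphi_j$. Since $Q'\ge 1/c_1$ almost everywhere and $|\varphi_j'|\le 1/(2c_1)$, each $q_j$ is continuous and strictly increasing. For $\delta$ small we have $h<\eta$, so $q_j$ takes values in $[0,1]$. Hence $q_j=G_j^{-1}$ for some $G_j\in\W$.

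\textbf{Distances.} A direct computation gives $\|\varphi_j\|_{L^2}^2=h^2w/3=\kappa w^3$ with $\kappa=1/(48c_1^2)$. Choose $w$ so that $\kappa w^3=\delta^2/4$, that is, $w\asymp\delta^{2/3}$. Then $d_\W(F,G_j)=\delta/2<\delta$, so every $G_j$ lies in $B_\W(F,\delta)$. For $j\ne k$ the supports are disjoint, so $d_\W(G_j,G_k)=\sqrt{2}\,\delta/2=\delta/\sqrt2$. Hence $\{G_j\}$ is a $\delta/\sqrt2$-separated subset of the ball with cardinality $M\ge c\,\delta^{-2/3}$.

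\textbf{Covering bound and conclusion.} Take $\epsilon\le\epsilon_0:=1/4$. Any open ball of radius $\delta\epsilon$ contains at most one $G_j$, since two points in it would be at distance less than $2\delta\epsilon\le\delta/2<\delta/\sqrt2$. Therefore $N(\delta\epsilon,B_\W(F,\delta),d_\W)\ge M\ge c\,\delta^{-2/3}$. Restricting the integral in \eqref{eq:H} to $\epsilon\in(0,\epsilon_0)$ gives
\[
H(F,\delta)\ge \epsilon_0\sqrt{1+\tfrac23\log(1/\delta)+\log c}\;\longrightarrow\;\infty\quad\text{as } \delta\to0 .
\]

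The only delicate step is keeping the perturbed functions valid quantile functions inside $\W$: they must stay monotone and $[0,1]$-valued. The lower bound $F'\le c_1$, i.e.\ $Q'\ge 1/c_1$, is exactly what permits bumps with slope proportional to $1/c_1$ without breaking monotonicity. Placing the bumps in $[1/4,3/4]$ handles the range constraint. The upper bound $c_0$ on $Q'$ is not really needed beyond continuity of $Q$.

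As a remark, taking sums $Q+\sum_{j\in S}\varphi_j$ over subsets $S$, combined with a Varshamov--Gilbert argument, would even give $\log N\gtrsim\delta^{-2/3}$. However, the single-bump family above already suffices for divergence.
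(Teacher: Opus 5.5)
Your proof is correct, and it reaches a $\delta^{-2/3}$-size packing by a genuinely different perturbation from the paper's. The paper works with distribution functions. It flattens $F$ on a block $[a_{l-1},a_l)$ of length $\asymp\delta^{2/3}$, which collapses that block's mass onto an atom at $a_l$, so $F_l^{-1}\equiv a_l$ on $(F(a_{l-1}),F(a_l)]$. It then needs both bounds $c_0\le F'\le c_1$ to bound $d_\W(F,F_l)^2=\int_{a_{l-1}}^{a_l}(u-a_l)^2F'(u)\,du$ from above (membership in the ball) and from below (separation). You instead add tent bumps directly to $Q=F^{-1}$. All distances are then exact by the isometry, and only $F'\le c_1$ is used, in the form $Q(t)-Q(s)\ge(t-s)/c_1$, to keep $q_j$ monotone. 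Your covering step is also more direct: a $\delta/\sqrt2$-separated set immediately forces $N(\delta\epsilon,\cdot)\ge M$, with no detour through external covering numbers at scale $\delta\epsilon/2$.

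The paper's picture is perhaps more intuitive and needs no realizability or range check. Yours has a real extra payoff: the $G_j$ stay regular, with quantile difference quotients in $[1/c_1-s,\,1/c_0+s]$ for tent slope $s$. Taking $s=\min\{1/c_1-1/U,\,1/L-1/c_0\}$ puts every $G_j$ in $\W_{L,U}$, so the same few lines prove Proposition~\ref{thm:second}. The paper's atomic $F_l$ leave $\W_{L,U}$, which forces its separate piecewise-linear construction. One small point: if ``absolutely continuous on $[0,1]$'' is read as allowing an atom at $0$, then $Q\equiv0$ on $(0,F(0)]$ and your claim that $Q$ is strictly increasing fails there. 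The fix is to center the bumps in $(F(0),1)$ instead of $[1/4,3/4]$.

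Your closing remark is not right as stated. Each of your bumps has norm $\delta/2$, so $Q+\sum_{j\in S}\varphi_j$ leaves $B_\W(F,\delta)$ once $|S|\ge4$. You need to shrink the bumps instead:
\begin{itemize}
\item Take height $h\asymp\delta$ (say $h=\delta$). The slope constraint forces $w\gtrsim h$, so take width $w\asymp\delta$, giving $M\asymp1/\delta$ bumps.
\item Every subset sum then lies within $h\sqrt{Mw/3}\le h/\sqrt6<\delta$ of $Q$.
\item A Varshamov--Gilbert subfamily of size $2^{M/8}$ is $c\,h$-separated for an absolute constant $c$.
\end{itemize}
This gives $\log N(\delta\epsilon,B_\W(F,\delta),d_\W)\gtrsim1/\delta$ for all $\epsilon$ below a constant, hence $H(F,\delta)\gtrsim\delta^{-1/2}$. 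Combined with the standard bound $\log N(\eta)\lesssim1/\eta$ for classes of monotone functions, this is the exact order. It is far stronger than the $\sqrt{\log(1/\delta)}$ lower bound obtained in the paper.
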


\begin{proof}
    Fix a small $\delta > 0$. Consider a sequence $\{a_l: l \ge 0\}$ defined as
    \begin{align*}
        a_l \equiv a_l(\delta):= \left( \frac{3\delta^2}{4c_1} \right)^{1/3} \times l.
    \end{align*}
    Let $L \equiv L(\delta):= \max \{l \in \mathbb{N} : a_l \leq 1 \}$. For $l \in \{1, \ldots, L\}$, define $F_l \equiv F_l(\cdot,\delta)\in \mathcal{W}$ as
    \begin{align*}
        F_l (u) :=
        \begin{cases}
            0, & \text{if}~u < 0, \\
            F(u), & \text{if}~u\in[0,a_{l-1})\cup[a_l,\infty),\\
            F(a_{l-1}), & \text{if}~u \in [a_{l-1}, a_l). 
        \end{cases}
    \end{align*}
    Then, for all $t\in(0,1]$,
    \begin{align*}
        F_l^{-1} (t) =
        \begin{cases}
            a_{l}, & \text{if}~t \in (F(a_{l-1}), F(a_l)], \\
            F^{-1}(t), & \text{if}~t \notin (F(a_{l-1}), F(a_l)].
        \end{cases}
    \end{align*}
    Hence,
    \begin{align*}
        d_{\mathcal{W}} \left(F, F_l \right)^2 & = \int_0^1 (F^{-1}(t)- F_l^{-1}(t))^2 \mathrm{d}t
        = \int_{F(a_{l-1})}^{F(a_l)} \left(F^{-1}(t) - a_l \right)^2 \mathrm{d}t = \int_{a_{l-1}}^{a_l} \left(u - a_l \right)^2 F'(u) \mathrm{d}u \\
        &\le \frac{c_1}{3} (a_{l} - a_{l-1})^3
        = \frac{\delta^2}{4}.
    \end{align*}
    Fix $\epsilon \in (0, \sqrt{c_0/(8c_1)})$. For all $1 \le l<m\le L$, we get
    \begin{align}\label{dist-Fmn}\begin{split}
        d_{\mathcal{W}} \left( F_m,  F_l \right)^2
        & = \int_{F(a_{l-1})}^{F(a_l)} \left( F^{-1}(t) - a_l \right)^2 \mathrm{d}t + \int_{F(a_{m-1})}^{F(a_m)} \left( F^{-1}(t) - a_m \right)^2 \mathrm{d}t \\
        & \geq \frac{c_0}{3}\big((a_{l} - a_{l-1})^3 + (a_{m} - a_{m-1})^3\big)
        = \frac{c_0}{c_1} \cdot \frac{\delta^2}{2} \geq 4 \delta^2 \epsilon^2,
        \end{split}
    \end{align}
    where the last inequality follows from $c_0/c_1 \ge 8 \epsilon^2$.
    If $G \in B_{\mathcal{W}}(F_l, \delta \epsilon)$, then
    $$d_{\mathcal{W}} \left(G,F \right) \leq d_{\mathcal{W}} \left( G,F_l \right) + d_{\mathcal{W}} \left(F_l,F \right) < \delta \epsilon + \delta / 2 < \delta,$$
    so that
    \begin{align}\label{covering1}
        \bigcup_{l=1}^L B_{\mathcal{W}}(F_l, \delta \epsilon) \subset B_{\mathcal{W}}(F, \delta).
    \end{align}

    We claim that
    \begin{align}\label{claim-entropy}
    L=N\Big(\delta\epsilon,\,\bigcup_{l=1}^L B_{\mathcal{W}}(F_l, \delta \epsilon),\,d_{\mathcal{W}}\Big).
    \end{align}
    Then, \eqref{claim-entropy} together with \eqref{covering1} implies that
    \begin{align*}
    L\leq N^{\rm ext}\Big(\delta\epsilon/2,\,\bigcup_{l=1}^L B_{\mathcal{W}}(F_l, \delta \epsilon),\,d_{\mathcal{W}}\Big)
    \leq N^{\rm ext}(\delta\epsilon/2,B_{\mathcal{W}}(F, \delta),d_{\mathcal{W}})\leq N(\delta\epsilon/2,B_{\mathcal{W}}(F, \delta),d_{\mathcal{W}}),
    \end{align*}
    where $N^{\rm ext}$ denotes the external covering number.
    This implies that
    \begin{align*}
        \int_{0}^{1} \sqrt{1+\log N(\delta \epsilon, B_{\mathcal{W}}(F, \delta), d_{\mathcal{W}})} \, \mathrm{d} \epsilon 
        &= \frac{1}{2} \int_{0}^2 \sqrt{1+\log N(\delta \epsilon/2, B_{\mathcal{W}}(F, \delta), d_{\mathcal{W}})} \, \mathrm{d} \epsilon \\
        &\ge \frac{1}{4\sqrt{2}}\sqrt{\frac{c_0}{c_1}}\sqrt{1+\log\Big\lfloor\Big(\frac{4c_1}{3\delta^2}\Big)^{1/3}\Big\rfloor},
    \end{align*}
    where the last inequality follows from $\sqrt{c_0/(8c_1)}\le 2$, and $\left\lfloor \cdot \right\rfloor : \mathbb{R} \to \mathbb{Z}$ denotes the floor function.

    It remains to prove \eqref{claim-entropy}. Clearly,
    \[
    L \ge N\Big(\delta\epsilon,\,\bigcup_{l=1}^L B_{\mathcal{W}}(F_l, \delta \epsilon),\,d_{\mathcal{W}}\Big).
    \]
    To prove the reverse inequality, suppose that there exist $G_1, \ldots, G_r \in \bigcup_{l=1}^L B_{\mathcal{W}}(F_l, \delta \epsilon)$ with $r <L$ such that
    $\{B_{\mathcal{W}}(G_1,\delta \epsilon), \ldots, B_{\mathcal{W}}(G_r,\delta \epsilon)\}$ covers $\bigcup_{l=1}^L B_{\mathcal{W}}(F_l, \delta \epsilon)$. Then,
    $F_1, \ldots, F_L \in \bigcup_{j=1}^r B_{\mathcal{W}}(G_j, \delta \epsilon)$. Since $r <L$, there exist an index $j$ and distinct indices $l$ and $m$ such that $F_l, F_m \in B_{\mathcal{W}}(G_j,\delta \epsilon)$. This means that $d_{\mathcal{W}}(F_l,F_m) <2 \delta \epsilon$. This contradicts \eqref{dist-Fmn}, which proves
    \[
    L \le N\Big(\delta\epsilon,\,\bigcup_{l=1}^L B_{\mathcal{W}}(F_l, \delta \epsilon),\,d_{\mathcal{W}}\Big).
    \]
    This completes the proof of the proposition.
\end{proof}

The class of distribution functions satisfying the assumptions of
Proposition~\ref{thm:first} is broad. Hence, Condition~\ref{con:A} fails for
various values of \(m_\oplus\).

\section{Second counterexample}\label{sec:second}

One might ask whether Condition~\ref{con:A} can hold when the ambient space
is restricted to a more regular subset of \(\W\). For fixed constants \(0<L<U<\infty\), define
\begin{align*}
    \mathcal{W}_{L, U} := \{ F \in \W : L |u-v| \leq \left| F(u) - F(v) \right| \leq U |u-v| \text{~for all~}u, v \in [0, 1] \}.
\end{align*}
Note that every distribution function in $\mathcal{W}_{L, U}$ is strictly increasing and absolutely continuous on $[0, 1]$. The next proposition shows that the answer remains negative for the metric space \((\W_{L,U}, d_\W)\). Write
\[
H_{L, U}(F,\delta):=\int_{0}^{1} \sqrt{1+\log N(\delta \epsilon, B_{\mathcal{W}_{L, U}}(F, \delta), d_{\mathcal{W}})} \, \mathrm{d} \epsilon.
\]

\begin{proposition} \label{thm:second}
    Let \(c_0\) and \(c_1\) satisfy \(L<c_0<c_1<U\), and let
\(F\in\W_{L,U}\) be a distribution function such that $c_0|u-v|\le |F(u)-F(v)|\le c_1|u-v|$ for all $u,v\in[0,1]$. Then, $H_{L, U}(F,\delta) \to \infty$ as $\delta \to 0$.
\end{proposition}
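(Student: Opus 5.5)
The plan is to repeat the packing argument from the proof of Proposition~\ref{thm:first}. The flat-piece perturbations $F_l$ used there are not allowed here, since they violate the lower Lipschitz bound $L$. I will therefore replace them with perturbations that change the slope of $F$ on a single small interval but stay inside $\W_{L,U}$. The slack $L<c_0\le c_1<U$ is what makes room for such changes.

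\textbf{Construction.} Fix a small $\delta$ and a block length $h=h(\delta)$, to be chosen later. Set $a_l=lh$ and let $K=\lfloor 1/h\rfloor$. For $l=1,\dots,K$, I will define $F_l$ to agree with $F$ outside $I_l=[a_{l-1},a_l)$. On $I_l$, I will first split $I_l$ at its midpoint and write $\Delta_l:=F(a_l)-F(a_{l-1})$. On the first half, $F_l$ has slope $F'+\eta$; on the second half, slope $F'-\eta$. Here $\eta:=\min(c_0-L,U-c_1)/2>0$. Explicitly,
\[
F_l(u)=F(u)+\eta\,\phi_l(u),
\]
where $\phi_l$ is the tent function supported on $I_l$ with slopes $\pm1$ and height $h/2$. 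Then $F_l$ is continuous and agrees with $F$ at both endpoints of $I_l$. Its derivative lies in $[c_0-\eta,c_1+\eta]\subset[L,U]$ almost everywhere, so $F_l\in\W_{L,U}$.

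\textbf{Distance estimates.} Since quantile functions of bi-Lipschitz distribution functions are bi-Lipschitz with constants $1/U$ and $1/L$, we get
\[
|F_l^{-1}(t)-F^{-1}(t)|\asymp |F_l(u)-F(u)| \quad\text{for } t=F(u),
\]
up to constants depending only on $L$ and $U$. The horizontal gap between the graphs is comparable to the vertical gap divided by the slope. This gives
\[
d_\W(F,F_l)^2\asymp \int_{I_l}\eta^2\phi_l^2\asymp \eta^2h^3 .
\]
Because the perturbations have disjoint supports in $t$ (both the $F_l$ and $F$ map $I_l$ onto the same interval $(F(a_{l-1}),F(a_l)]$), we also have
\[
d_\W(F_l,F_m)^2=d_\W(F,F_l)^2+d_\W(F,F_m)^2 .
\]
This is the analogue of \eqref{dist-Fmn}. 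I will then choose $h=\kappa\delta^{2/3}$, with $\kappa$ depending on $L,U,\eta$, so that $d_\W(F,F_l)\le\delta/2$. The lower bound then gives
\[
d_\W(F_l,F_m)\ge 2\delta\epsilon \quad\text{for all }\epsilon<\epsilon_0,
\]
with $\epsilon_0>0$ a constant depending only on $L,U,c_0,c_1$.

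\textbf{Conclusion.} From here the argument is word for word that of Proposition~\ref{thm:first}, with all balls taken inside $\W_{L,U}$. The balls $B_{\W_{L,U}}(F_l,\delta\epsilon)$ lie in $B_{\W_{L,U}}(F,\delta)$. The $F_l$ are $2\delta\epsilon$-separated, which forces $N(\delta\epsilon/2,B_{\W_{L,U}}(F,\delta),d_\W)\ge K\asymp\delta^{-2/3}$. Integrating over $\epsilon\in(0,\min(\epsilon_0,2))$ then gives
\[
H_{L,U}(F,\delta)\gtrsim\sqrt{\log(1/\delta)}\to\infty .
\]

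\textbf{Main obstacle.} The delicate step is the two-sided comparison of $d_\W(F,F_l)$ with $\eta h^{3/2}$, since it passes from the distribution-function side to the quantile side. I would handle it with the pointwise identity
\[
F^{-1}(t)-F_l^{-1}(t)=\frac{F_l(s)-F(s)}{\text{(a slope in }[L,U])},
\]
obtained from the mean value theorem. Here $s=F_l^{-1}(t)$, and this uses $F(F^{-1}(t))=t=F_l(F_l^{-1}(t))$. The change of variables $t=F_l(s)$ has Jacobian in $[L,U]$. Together these give both bounds with constants depending only on $L$ and $U$.
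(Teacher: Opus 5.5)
Your proof is correct. Its skeleton is the paper's: blocks of length $\asymp\delta^{2/3}$, perturbations whose quantile functions differ from $F^{-1}$ only on the disjoint intervals $(F(a_{l-1}),F(a_l)]$ so that squared distances add, and the packing argument of Proposition~\ref{thm:first}. The key step, however, is done differently.

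The paper replaces $F$ on each block by the extremal piecewise-linear function with slope $L$ up to a kink $b_l$ and slope $U$ afterwards, and writes $F_l^{-1}$ explicitly. It gets $d_\W(F,F_l)^2<\delta^2/4$ from the AM--GM inequality. It gets the lower bound by comparing $F^{-1}$ with lines of slopes $1/c_0$ and $1/c_1$. As a result, its $\epsilon_0$ carries the factor $\min\{1/L-1/c_0,\,1/c_1-1/U\}$ and degenerates as $c_0\downarrow L$ or $c_1\uparrow U$.

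You instead use the additive tent perturbation $F+\eta\phi_l$, whose membership in $\W_{L,U}$ is immediate, together with a general horizontal-versus-vertical comparison. With $s=F_l^{-1}(t)$, the identity $F(F^{-1}(t))-F(s)=F_l(s)-F(s)$ and the substitution $dt=F_l'(s)\,ds$ give
\[
\frac{L}{U^2}\cdot\frac{\eta^2h^3}{12}\le d_\W(F,F_l)^2\le\frac{U}{L^2}\cdot\frac{\eta^2h^3}{12}.
\]
This needs no explicit quantile formulas and works for any bump shape. Because both bounds scale like $\eta^2h^3$, their ratio $(L/U)^3$ does not involve the slack. So you can take $\epsilon_0=(L/U)^{3/2}/(2\sqrt2)$, and the slack enters only through $K\asymp(U\eta^2/L^2)^{1/3}\delta^{-2/3}$, i.e.\ through how small $\delta$ must be. The paper's route, in exchange, gives fully explicit constants tied to a specific extremal construction.

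A minor simplification: the $F_l$ lie in $B_{\W_{L,U}}(F,\delta)$ and are $2\delta\epsilon$-separated, so no open ball of radius $\delta\epsilon$ contains two of them. Hence $N(\delta\epsilon,B_{\W_{L,U}}(F,\delta),d_\W)\ge K$ directly, without halving the radius.
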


\begin{proof}
    Fix a small $\delta > 0$. Consider sequences $\{a_l: l \ge 0\}$ and $\{b_l: l \ge 1\}$ defined as
    \begin{align*}
        a_l &\equiv a_l(\delta):= \left( \frac{27\delta^2}{16U} \right)^{1/3} \times l, \\
        b_l &\equiv b_l(\delta):= \frac{U a_{l} - L a_{l-1} + F(a_{l-1}) - F(a_{l})}{U-L}.
    \end{align*}
    Define $K \equiv K(\delta):= \max \{l \in \mathbb{N} : a_l \leq 1 \}$. For $l \in \{1, \ldots, K\}$, it follows that
    \begin{align*}
        L (a_{l} - a_{l-1}) < c_0 (a_{l} - a_{l-1}) \le F(a_{l}) - F(a_{l-1}) \le c_1 (a_{l} - a_{l-1}) < U (a_{l} - a_{l-1}),
    \end{align*}
    which implies that $a_{l-1} < b_l < a_l$. For $l \in \{1, \ldots, K\}$, define $F_l \equiv F_l(\cdot,\delta)\in\mathcal{W}_{L,U}$ as
    \begin{align*}
        F_l (u) :=
        \begin{cases}
            0, & \text{if}~u < 0, \\
            F(u), & \text{if}~u\in[0,a_{l-1})\cup[a_l, \infty),\\
            L(u-a_{l-1}) + F(a_{l-1}), & \text{if}~u \in [a_{l-1}, b_l), \\
            U(u-a_{l}) + F(a_{l}), & \text{if}~u \in [b_l, a_l). 
        \end{cases}
    \end{align*}
    The quantile function of $F_l$ is given by
    \begin{align*}
        F_l^{-1} (t) =
        \begin{cases}
            \frac{t-F(a_{l-1})}{L} + a_{l-1}, & \text{if}~t \in (F(a_{l-1}), F_l(b_l)], \\
            \frac{t-F(a_{l})}{U} + a_{l}, & \text{if}~t \in (F_l(b_{l}), F(a_l)], \\
            F^{-1}(t), & \text{if}~t \notin (F(a_{l-1}), F(a_l)].
        \end{cases}
    \end{align*}
    A direct calculation shows that
    \begin{align*}
        \frac{t-F(a_{l-1})}{U} + a_{l-1} < \frac{t-F(a_{l-1})}{c_1}+ a_{l-1} &\le F^{-1} (t) \le \frac{t-F(a_{l})}{c_1} + a_{l} \le \frac{t-F(a_{l})}{U} + a_{l}, \\
        \frac{t-F(a_{l-1})}{U} + a_{l-1} &\le F_l^{-1} (t) \le \frac{t-F(a_{l})}{U} + a_{l}
    \end{align*}
    for all $t\in(F(a_{l-1}),F(a_l)]$.
    This implies that
    \begin{align}\label{diff bound}
    \begin{split}
    |F^{-1}(t)-F^{-1}_l(t)| & < \left|\frac{t-F(a_{l})}{U} + a_{l}-\left(\frac{t-F(a_{l-1})}{U} + a_{l-1}\right) \right| \\
    & = \left|a_{l} - a_{l-1} - \frac{F(a_l) -F(a_{l-1})}{U}\right|.
    \end{split}
    \end{align}
    It follows that
    \begin{align*}
        d_{\mathcal{W}} \left(F, F_l \right)^2 & = \int_0^1 (F^{-1}(t)- F_l^{-1}(t))^2 \mathrm{d}t
        = \int_{F(a_{l-1})}^{F(a_l)} \left(F^{-1}(t) - F_l^{-1}(t) \right)^2 \mathrm{d}t \\
        &< \int_{F(a_{l-1})}^{F(a_l)} \left(a_{l} - a_{l-1} - \frac{F(a_l) -F(a_{l-1})}{U}\right)^2 \mathrm{d}t \\
        & = \left(a_{l} - a_{l-1} - \frac{F(a_l) -F(a_{l-1})}{U}\right)^2 \left( F(a_l) -  F(a_{l-1}) \right) \\
        & = \frac{1}{2U^2} \left(U (a_{l} - a_{l-1}) - (F(a_l) -F(a_{l-1})) \right)^2 \left( 2( F(a_l) -  F(a_{l-1}) ) \right) \\
        & \le \frac{1}{2U^2} \left( \frac{2U(a_{l} - a_{l-1})}{3} \right)^3 = \frac{4U}{27} (a_{l} - a_{l-1})^3 = \frac{\delta^2}{4},
    \end{align*}
    where the first inequality follows from \eqref{diff bound}, and the last follows from the arithmetic--geometric mean inequality applied to three nonnegative numbers. Now, define
    \begin{align*}
        \epsilon_0 := \min \left\{ \left(\frac{9L^3}{128U}\right)^{1/2} \cdot \min \left\{ \frac{1}{L} - \frac{1}{c_0} , \frac{1}{c_1}- \frac{1}{U} \right\}, \frac{1}{2} \right\}
    \end{align*}
    and let $\epsilon \in \left(0, \epsilon_0 \right)$. Since
    \begin{align*}
        F^{-1} (t) \le \frac{t-F(a_{l-1})}{c_0} + a_{l-1} < F_l^{-1} (t), \quad &\text{if}~t \in (F(a_{l-1}), F_l(b_l)], \\
        F^{-1} (t) \le \frac{t-F(a_{l})}{c_1} + a_{l} \le F_l^{-1} (t), \quad &\text{if}~t \in (F_l(b_l), F_l(a_l)],
    \end{align*}
    we have
    \begin{align}\label{some bounds}
    \begin{split}
    F_l^{-1}(t)-F^{-1}(t)\ge F_l^{-1}(t)-\left(\frac{t-F(a_{l-1})}{c_0} + a_{l-1}\right), \quad &\text{if}~t \in (F(a_{l-1}), F_l(b_l)], \\
    F_l^{-1}(t)-F^{-1}(t)\ge F_l^{-1}(t)-\left(\frac{t-F(a_{l})}{c_1} + a_{l}\right), \quad &\text{if}~t \in (F_l(b_l), F_l(a_l)].
    \end{split}
    \end{align}
    It follows that
    \begin{align}\begin{split} \label{eq:7.33_2}
        & \int_{F(a_{l-1})}^{F(a_l)} \left(F^{-1}(t) - F_l^{-1}(t) \right)^2 \mathrm{d}t \\
        & = \int_{F(a_{l-1})}^{F_l(b_l)} \left(F^{-1}(t) - F_l^{-1}(t) \right)^2 \mathrm{d}t + \int_{F_l(b_{l})}^{F(a_l)} \left(F^{-1}(t) - F_l^{-1}(t) \right)^2 \mathrm{d}t \\
        & \ge \int_{F(a_{l-1})}^{F_l(b_l)} \left( \frac{1}{L} - \frac{1}{c_0} \right)^2 \left( t-F(a_{l-1}) \right)^2 \mathrm{d}t + \int_{F_l(b_{l})}^{F(a_l)} \left( \frac{1}{c_1} - \frac{1}{U} \right)^2 \left( t-F(a_{l}) \right)^2 \mathrm{d}t \\
        & = \frac{1}{3} \left( \frac{1}{L} - \frac{1}{c_0} \right)^2 \left( F_l(b_l)-F(a_{l-1}) \right)^3 + \frac{1}{3} \left( \frac{1}{c_1} - \frac{1}{U} \right)^2 \left( F(a_l)-F_l(b_l) \right)^3 \\
        & \ge \frac{1}{3} \min \left\{ \left( \frac{1}{L} - \frac{1}{c_0} \right)^2, \left( \frac{1}{c_1} - \frac{1}{U} \right)^2 \right\} \left( \left( F_l(b_l)-F(a_{l-1}) \right)^3 + \left( F(a_l)-F_l(b_l) \right)^3 \right) \\
        & \ge \frac{1}{12} \min \left\{ \left( \frac{1}{L} - \frac{1}{c_0} \right)^2, \left( \frac{1}{c_1} - \frac{1}{U} \right)^2 \right\} \left( F(a_l)-F(a_{l-1}) \right)^3 \\
        & > \frac{L^3}{12} \min \left\{ \left( \frac{1}{L} - \frac{1}{c_0} \right)^2, \left( \frac{1}{c_1} - \frac{1}{U} \right)^2 \right\} \left( a_l-a_{l-1} \right)^3 \\
        & = \frac{9L^3}{64U} \min \left\{ \left( \frac{1}{L} - \frac{1}{c_0} \right)^2, \left( \frac{1}{c_1} - \frac{1}{U} \right)^2 \right\} \delta^2 \ge 2 \delta^2 \epsilon_0^2 > 2 \delta^2 \epsilon^2,
    \end{split} \end{align}
    where the first inequality follows from \eqref{some bounds} and the third inequality follows from the fact that $4(a^3+b^3)\ge(a+b)^3$ for all $a,b>0$.
    By \eqref{eq:7.33_2}, for all $1\leq l<m\leq K$, we have
    \begin{align}\begin{split} \label{eq:7.33_3}
        d_{\mathcal{W}} \left( F_m,  F_l \right)^2
        & = \int_{F(a_{l-1})}^{F(a_l)} \left( F^{-1}_m(t) - F_l^{-1}(t) \right)^2 \mathrm{d}t + \int_{F(a_{m-1})}^{F(a_m)} \left( F^{-1}_m(t) - F_l^{-1}(t) \right)^2 \mathrm{d}t \\
        & = \int_{F(a_{l-1})}^{F(a_l)} \left( F^{-1}(t) - F_l^{-1}(t) \right)^2 \mathrm{d}t + \int_{F(a_{m-1})}^{F(a_m)} \left( F^{-1}(t) - F_m^{-1}(t) \right)^2 \mathrm{d}t > 4 \delta^2 \epsilon^2.
    \end{split} \end{align}
    Arguing as in the proof of Proposition \ref{thm:first}, we get
    \begin{align*}
        K=N\Big(\delta\epsilon,\,\bigcup_{l=1}^K B_{\mathcal{W}_{L, U}}(F_l, \delta \epsilon),\,d_{\mathcal{W}}\Big) \le N(\delta\epsilon/2,B_{\mathcal{W}_{L, U}}(F, \delta),d_{\mathcal{W}}).
    \end{align*}
    This implies that
    \begin{align*}
        \int_{0}^{1} \sqrt{1+\log N(\delta \epsilon, B_{\mathcal{W}_{L, U}}(F, \delta), d_{\mathcal{W}})} \, \mathrm{d} \epsilon
        &= \frac{1}{2} \int_{0}^2 \sqrt{1+\log N(\delta \epsilon/2, B_{\mathcal{W}_{L, U}}(F, \delta), d_{\mathcal{W}})} \, \mathrm{d} \epsilon \\
        &\ge \frac{\epsilon_0}{2}\sqrt{1+\log\Big\lfloor\Big(\frac{16U}{27\delta^2}\Big)^{1/3}\Big\rfloor},
    \end{align*}
    where the inequality follows from the definition of $K$. This completes the proof of the proposition.
\end{proof}

The assumptions of Proposition~\ref{thm:second} also hold for a broad
class of distribution functions. Hence, Condition~\ref{con:A} fails for various values of \(m_\oplus\), even within \(\W_{L,U}\).

\section{Discussion}\label{sec:discussion}

The results of this paper identify a limitation of a commonly used
local entropy condition. Some studies have instead imposed weaker conditions in which the upper bound in Condition \ref{con:A} is allowed to depend on \(\delta\), rather than being required to be \(O(1)\). For example, \citet{ImJeonPark2025} considered an $O(\delta^{\alpha-1})$ bound for $0<\alpha\leq 1$. They showed that the convergence rate of their regression estimator depends on $\alpha$ using an empirical-process argument. They also proved that \(\W([a,b])\) satisfies this bound for any \(0 < \alpha < 1/2\). More recently, \citet{PetersenMuller2026} demonstrated that the global Fr\'echet regression can still achieve the optimal rate $O_p(n^{-1/2})$ via a projection argument rather than an empirical-process argument. 
Despite this result, obtaining a sharper upper bound for the integral in Condition \ref{con:A}, ideally one closer to the square-root logarithmic lower bound established here, remains an important open problem both for Fr\'echet regression beyond the global approach and for other statistical problems on the quadratic Wasserstein space that rely on empirical-process theory.

\bibliographystyle{apalike}
\bibliography{reference}

\end{document}